\theoremstyle{definition}
\newtheorem{definition}{Definition}[section]
\newtheorem{rem}[definition]{Remark}
\newtheorem*{ackname}{Acknowledgment}
\theoremstyle{theorem}
\newtheorem{thm}[definition]{Theorem}
\newtheorem{lem}[definition]{Lemma}
\newtheorem{prop}[definition]{Proposition}
\renewcommand{\phi}{\varphi}
\begin{document}

\title[Real zeros of Hurwitz zeta-functions in the interval $(0,1)$]
{Real zeros of  Hurwitz zeta-functions\\
and their asymptotic behavior\\
in the interval $(0,1)$}

\author[K.~Endo]{Kenta Endo}
\author[Y.~Suzuki]{Yuta Suzuki}

\subjclass[2010]{Primary 11M35, Secondary 11M20}

\keywords{Hurwitz zeta-function; Real zeros}

\maketitle
\begin{abstract}
Let $0<a\leq1, s\in\mathbb{C}$, and $\zeta(s,a)$ be the Hurwitz zeta-function. Recently, T.~Nakamura showed that $\zeta(\sigma,a)$ does not vanish for any $0<\sigma<1$ if and only if $1/2\leq a \leq1$. In this paper, we show that $\zeta(\sigma,a)$ has precisely one zero in the interval $(0,1)$ if $0<a<1/2$. Moreover, we reveal the asymptotic behavior of this unique zero with respect to $a$.
\end{abstract}

\section{Introduction and statement of main results}
Let $0<a \leq 1$ and $s \in \mathbb{C}$. The Hurwitz zeta-function $\zeta(s,a)$ is defined by
\[
\zeta(s,a)=\sum_{n=0}^\infty(n+a)^{-s},\quad s=\sigma+it,\quad \sigma>1,\quad t\in\mathbb{R}.
\]
This series converges absolutely and uniformly on any compact subset of the half-plane $\sigma>1$.
Moreover, $\zeta(s,a)$ has the meromorphic continuation to
the whole complex plane with a simple pole of residue $1$ at $s=1$.
Recently, T.~Nakamura~\cite{T. Nakamura} showed the following non-vanishing condition of $\zeta(\sigma,a)$ on the interval $(0,1)$.

\begin{thm}[see \cite{T. Nakamura}]\mbox{} 
\label{Thm:Nakamura}

\begin{enumerate}
\renewcommand{\labelenumi}{\upshape(\arabic{enumi})} \item The Hurwitz zeta-function $\zeta(\sigma,a)$ does not vanish in the interval $(0,1)$ when $1/2\leq a \leq 1$.
\item The Hurwitz zeta-function $\zeta(\sigma,a)$ has at least one zero in the interval $(0,1)$ when $0<a<1/2$.
\end{enumerate}
\end{thm}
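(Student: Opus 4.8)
The plan is to prove the two parts by elementary real-variable arguments, separating a sign-change argument for the existence of a zero (part (2)) from a monotonicity-in-$a$ argument for the non-vanishing (part (1)).

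For part (2), I would start from the classical special value $\zeta(0,a) = \tfrac{1}{2} - a$ and from the fact that $\zeta(s,a)$ is holomorphic on $\mathbb{C}\setminus\{1\}$, so that $\sigma \mapsto \zeta(\sigma,a)$ is real-valued and continuous on $(0,1)$. For $0 < a < 1/2$ we have $\zeta(0,a) = \tfrac12 - a > 0$, so $\zeta(\sigma,a) > 0$ for $\sigma$ near $0^+$; on the other hand the simple pole of residue $1$ at $s=1$ forces $\zeta(\sigma,a) \to -\infty$ as $\sigma \to 1^-$. The intermediate value theorem then yields a zero in $(0,1)$.

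For part (1) the engine is the differentiation formula $\frac{\partial}{\partial a}\zeta(s,a) = -s\,\zeta(s+1,a)$, valid for $\sigma>1$ by term-by-term differentiation and extended to all $s\neq 1$ by analytic continuation. When $\sigma \in (0,1)$ we have $\sigma+1 \in (1,2)$, so $\zeta(\sigma+1,a) = \sum_{n\geq 0}(n+a)^{-\sigma-1} > 0$ and hence $\frac{\partial}{\partial a}\zeta(\sigma,a) = -\sigma\,\zeta(\sigma+1,a) < 0$; that is, $a \mapsto \zeta(\sigma,a)$ is strictly decreasing for each fixed $\sigma \in (0,1)$. It then suffices to control the base point $a=1/2$, where $\zeta(s,1/2) = (2^s-1)\zeta(s)$ reduces matters to the Riemann zeta-function: on $(0,1)$ one has $2^\sigma - 1 > 0$ and $\zeta(\sigma) < 0$, the latter following from $\zeta(\sigma)\bigl(1 - 2^{1-\sigma}\bigr) = \sum_{n\geq 1}(-1)^{n-1}n^{-\sigma} > 0$ together with $1 - 2^{1-\sigma} < 0$. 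Thus $\zeta(\sigma,1/2) < 0$, and monotonicity upgrades this to $\zeta(\sigma,a) \leq \zeta(\sigma,1/2) < 0$ for every $a \in [1/2,1]$ and $\sigma \in (0,1)$, giving the stated non-vanishing (indeed strict negativity).

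I expect part (1) to be the crux: once the two structural facts---strict monotonicity in $a$ and the factorization at $a=1/2$---are in place, the delicate ``no real zero'' assertion collapses to an elementary sign computation, so the real work is recognizing $a=1/2$ as the correct comparison point. The only steps that seem to need genuine care are the justification of the differentiation identity via analytic continuation and the negativity of $\zeta(\sigma)$ on $(0,1)$, both of which are standard.
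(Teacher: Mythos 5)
Your proposal is correct, and it is worth noting up front that the paper itself does not reprove this theorem --- it cites Nakamura and only imports the relevant machinery (the integral representation $\Gamma(s)\zeta(s,a)=\int_0^\infty H(a,x)x^{s-1}\,dx$ of Lemma \ref{rep} and the fact that $H(1/2,x)<0$ for all $x>0$). Your part (2) coincides with the argument the paper actually uses later in the proof of Theorem \ref{main}(1): the special value $\zeta(0,a)=\tfrac12-a>0$, the pole forcing $\zeta(\sigma,a)\to-\infty$ as $\sigma\to1^-$, and the intermediate value theorem. Your part (1), however, takes a genuinely different route from Nakamura's. The approach underlying the paper establishes $\zeta(\sigma,1/2)<0$ by showing the integrand $H(1/2,x)$ is pointwise negative, and gets monotonicity in $a$ from $\frac{\partial}{\partial a}\zeta(\sigma,a)=-\frac{1}{\Gamma(\sigma)}\int_0^\infty\frac{e^{(1-a)x}}{e^x-1}x^{\sigma}\,dx<0$; you instead reduce the base case to the Riemann zeta-function via $\zeta(s,1/2)=(2^s-1)\zeta(s)$ and the alternating-series argument for $\zeta(\sigma)<0$ on $(0,1)$, and obtain monotonicity from $\frac{\partial}{\partial a}\zeta(s,a)=-s\,\zeta(s+1,a)$, which at $\sigma\in(0,1)$ is a convergent positive series. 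The two monotonicity statements are of course equivalent (your derivative formula is the integral representation at $s+1$ in disguise), but your treatment of the base point $a=1/2$ is more elementary in that it avoids any calculus on the kernel $H$ and leans on classical facts about $\zeta(s)$; the kernel approach buys more, since the sign analysis of $H(a,x)$ is exactly what the paper needs again for the uniqueness and simplicity statements of Theorem \ref{main}. The only steps in your write-up deserving an explicit word are the ones you already flagged: the extension of $\frac{\partial}{\partial a}\zeta(s,a)=-s\,\zeta(s+1,a)$ beyond $\sigma>1$ (standard, e.g.\ by noting both sides are analytic in $s$ for fixed $a$ and agree on a half-plane), and the positivity of the alternating series, which follows by pairing consecutive terms. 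No gaps.
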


In this paper, we show the following refinement of Theorem \ref{Thm:Nakamura}.

\begin{thm}
\label{main}
We have the following:
\begin{enumerate}
\renewcommand{\labelenumi}{\upshape(\arabic{enumi})}
\item The Hurwitz zeta-function $\zeta(\sigma,a)$ has precisely one zero in the interval $(0,1)$ when $0<a<1/2$, and these zeros are all simple.
\item For $0<a<1/2$, let $\beta(a)$ denote the unique zero of $\zeta(\sigma,a)$ in $(0,1)$.
Then $\beta:(0,1/2)\rightarrow(0,1)$ is a strictly decreasing $C^\infty$-diffeomorphism.
Moreover, the asymptotic formula
\[
\beta(a)=1-a+a^2\log a+O(a^2)\quad \text{as $a\rightarrow 0+$}
\]
holds.
\end{enumerate}
\end{thm}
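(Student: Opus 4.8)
The plan is to analyze $\zeta(\sigma,a)$ as a function of the two real variables $(\sigma,a)\in(0,1)\times(0,1/2)$ and to locate its zero set by exploiting that the $a$-direction is monotone. Differentiating the Dirichlet series for $\sigma>1$ and continuing in $s$ gives $\partial_a\zeta(s,a)=-s\,\zeta(s+1,a)$; for $\sigma\in(0,1)$ we have $\sigma+1\in(1,2)$, so $\zeta(\sigma+1,a)=\sum_{n\ge0}(n+a)^{-\sigma-1}>0$ and hence $\partial_a\zeta(\sigma,a)=-\sigma\,\zeta(\sigma+1,a)<0$. Thus $a\mapsto\zeta(\sigma,a)$ is strictly decreasing for each fixed $\sigma$. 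From $\zeta(\sigma,a)=a^{-\sigma}+\zeta(\sigma,a+1)$ we get $\zeta(\sigma,a)\to+\infty$ as $a\to0^+$, while $\zeta(\sigma,\tfrac12)=(2^\sigma-1)\zeta(\sigma)<0$ because $\zeta(\sigma)<0$ on $(0,1)$. Hence there is a unique $\alpha(\sigma)\in(0,1/2)$ with $\zeta(\sigma,\alpha(\sigma))=0$, and since $\partial_a\zeta\neq0$ the implicit function theorem makes $\alpha$ a $C^\infty$ map; the zero set is exactly the graph $\{(\sigma,\alpha(\sigma)):\sigma\in(0,1)\}$.

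I would then reduce everything in part (1) and the diffeomorphism statement in part (2) to the single assertion that every zero is simple, i.e. $\partial_\sigma\zeta\neq0$ on the zero set. Differentiating $\zeta(\sigma,\alpha(\sigma))\equiv0$ gives $\alpha'(\sigma)=-\partial_\sigma\zeta/\partial_a\zeta$, so (as $\partial_a\zeta<0$) $\alpha'$ vanishes precisely at a multiple zero. If there are none, $\alpha'$ never vanishes and $\alpha$ is strictly monotone; the boundary limits $\alpha(\sigma)\to\tfrac12$ as $\sigma\to0^+$ (forced by $\zeta(0,a)=\tfrac12-a$) and $\alpha(\sigma)\to0$ as $\sigma\to1^-$ (forced by $\zeta(\sigma,a)\to-\infty$ for fixed $a$) show $\alpha$ is a strictly decreasing $C^\infty$-bijection of $(0,1)$ onto $(0,1/2)$. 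Its inverse $\beta$ is then the asserted strictly decreasing $C^\infty$-diffeomorphism, and the bijectivity of $\alpha$ gives exactly one zero of $\zeta(\cdot,a)$ for each $a\in(0,1/2)$, simple by construction.

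The main obstacle is precisely this simplicity, because $\partial_\sigma\zeta$ has no fixed sign on the rectangle; for example Lerch's formula gives $\partial_\sigma\zeta(0,a)=\log\Gamma(a)-\tfrac12\log(2\pi)\to+\infty$ as $a\to0^+$, so negativity of $\partial_\sigma\zeta$ can only be extracted on the zero set itself. I would attack this via Hurwitz's functional equation $\zeta(\sigma,a)=\frac{2\Gamma(1-\sigma)}{(2\pi)^{1-\sigma}}\,C(1-\sigma,a)$, where $C(s,a)=\cos(\tfrac{\pi s}{2})P(s,a)+\sin(\tfrac{\pi s}{2})Q(s,a)$ with $P(s,a)=\sum_{n\ge1}n^{-s}\cos(2\pi na)$ and $Q(s,a)=\sum_{n\ge1}n^{-s}\sin(2\pi na)$. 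The prefactor is positive and smooth on $(0,1)$, so it suffices to prove the zeros of $s\mapsto C(s,a)$ are simple. The key tool is the positivity $Q(s,a)>0$ for $a\in(0,1/2)$, $s\in(0,1)$ (a sine-sum positivity in the spirit of Fej\'er--Jackson), which recasts $C(s,a)=0$ as $\tan(\tfrac{\pi s}{2})=-P(s,a)/Q(s,a)$ and reduces simplicity to a transversal intersection with the strictly increasing function $\tan(\tfrac{\pi s}{2})$. As an alternative I would use Hermite's integral representation, eliminate the $a^{-\sigma}$ terms by the zero relation, and estimate the remaining integral to force $\partial_\sigma\zeta<0$; either way the sign/size estimate for the auxiliary series or integral is where the real work lies.

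For the asymptotic I would combine $\zeta(\sigma,a)=a^{-\sigma}+\zeta(\sigma,a+1)$ with the Laurent expansion $\zeta(s,a+1)=\frac{1}{s-1}+O(1)$ near $s=1$, the $O(1)$ uniform for $a$ near $0$. Writing $u=\beta(a)-1$, the equation $\zeta(\beta,a)=0$ becomes $\frac1u+O(1)=-a^{-\beta}$, hence $u=-a^{\beta}\bigl(1+O(a^{\beta})\bigr)=-a\,e^{u\log a}\bigl(1+O(a)\bigr)$. Since $u\to0$ forces $u\log a\to0$, I would substitute $u=-a(1+v)$ and solve, using $e^{u\log a}=1+u\log a+O((u\log a)^2)$, to obtain $v=-a\log a+O(a)$ and therefore $u=-a+a^2\log a+O(a^2)$, i.e. $\beta(a)=1-a+a^2\log a+O(a^2)$. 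The only delicate point is to propagate the error terms so that the $a^2\log a$ term is retained while the remainder is genuinely $O(a^2)$, which the substitution $u=-a(1+v)$ handles cleanly.
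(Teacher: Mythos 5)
Your proposal correctly identifies the crux of part (1) --- the non-vanishing of $\partial_\sigma\zeta$ on the zero set, i.e.\ the simplicity of the zeros --- and correctly observes that everything else (uniqueness, strict monotonicity of $\beta$, the $C^\infty$-diffeomorphism property via the implicit/inverse function theorems, and the boundary limits of $\alpha$) would follow from it together with $\partial_a\zeta(\sigma,a)=-\sigma\zeta(\sigma+1,a)<0$. The asymptotic analysis in your last paragraph is essentially the paper's own argument (Euler--Maclaurin / Laurent expansion at $s=1$ with uniform error, then two rounds of bootstrapping) and is sound, modulo the small point that $u\log a\to0$ should be justified by first noting $|u|\ll a^{\beta}\ll a^{1/2}$ rather than by $u\to0$ alone. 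But the central claim is left unproven: you explicitly defer the simplicity to ``the sign/size estimate \dots where the real work lies,'' and neither of your two proposed attacks is carried out. In particular, the reduction via Hurwitz's functional equation to $\tan(\pi s/2)=-P(s,a)/Q(s,a)$ does not by itself reduce anything: transversality of that intersection is exactly equivalent to the simplicity you are trying to prove, and the strict monotonicity of $\tan(\pi s/2)$ gives nothing unless you also control the $s$-derivative of $-P/Q$ at the crossing, which you do not address; the positivity of $Q(s,a)$ for $0<s<1$ is likewise asserted without proof. As written, part (1) is a reduction plus a research plan, not a proof.

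For comparison, the paper closes this gap with a short and self-contained device: from the integral representation $\Gamma(\sigma)\zeta(\sigma,a)=\int_0^\infty H(a,x)\,x^{\sigma-1}\,dx$ with $H(a,x)=\tfrac{e^{(1-a)x}}{e^x-1}-\tfrac1x$, an elementary calculus argument shows that for $0<a<1/2$ the kernel $H(a,\cdot)$ changes sign exactly once, from positive to negative, at some $x_0>0$. Normalizing by the weight $x_0^{-\sigma}$ then makes the integrand's $\sigma$-derivative $H(a,x)(x/x_0)^\sigma\log(x/x_0)$ pointwise negative for $x\neq x_0$, so $x_0^{-\sigma}\Gamma(\sigma)\zeta(\sigma,a)$ is strictly decreasing on $(0,1)$ with strictly negative derivative; combined with the sign change between $\sigma=0$ and $\sigma=1$, this yields uniqueness and simplicity simultaneously. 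You should either adopt such a monotonizing weight (your ``Hermite integral'' alternative is in this spirit but needs the actual kernel sign analysis) or supply a genuine proof of the transversality in the functional-equation approach; without one of these, the theorem is not proved.
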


The real zeros of various type of zeta-functions are studied by many researchers
since the information of such real zeros has significant meanings for arithmetical problems.
For example, the possible real zeros near to $s=1$ of Dirichlet $L$-functions associated to real characters
are well known as the Siegel zero, which are related to the distribution of prime numbers in arithmetic progressions
or the class number of the associated quadratic field.
Recall the well known formula
\[
L(s,\chi)=q^{-s}\sum_{r=1}^{q}\chi(r)\zeta(s,r/q),
\]
which relates Hurwitz zeta-functions and Dirichlet $L$-functions associated to characters modulo $q$.
In this context, our results can be seen as a toy model of the real zero problems on zeta-functions.

\if0
For many characters $\chi$,
the non-existence of Siegel zeros has been already checked,
see e.g.~\cite[section 10.5.7]{Cohen}, \cite{Con-Sou} and \cite{Watkins}.
Recently, D.~Schipani~\cite{Schipani} showed that $\zeta(\sigma,a)$ is negative
and in particular nonzero when $\sigma \in (0,1)$ and $1-\sigma\leq a$.
As for these problems on the real zeros, we use Nakamura's result to obtain the following proposition:

\begin{prop}\label{Siegel}
Suppose that a real Dirichlet character $\chi$ modulo $q$ satisfies 
\[
\sum_{1\leq r\leq u}\chi(r)\geq0 \quad \text{for any}\quad1\leq u \leq q.
\]
Then we find that $L(\sigma,\chi)$ does not vanish on $(0,1)$. In particular, $L(\sigma,\chi)$ does for any Dirichlet character modulo $q=1,2,3,4,6$. $($which is already known result.$)$
\end{prop}
\fi

\begin{rem}
For the real negative zeros of $\zeta(s,a)$,
Nakamura~\cite{Nakamura2} obtained the condition of the existence of zeros in $(-1,0)$
and Matsusaka~\cite{Matsusaka} generalized Nakamura's condition to all intervals $(-N-1,-N)$ for integers $N\ge0$.
Their conditions are described with the roots of Bernoulli polynomials.
Moreover, Matsusaka~\cite{Matsusaka} proved that there is precisely one zero in the interval $[-2M-2,-2M)$ for $M\ge2$.
\end{rem}

\section{Proofs}
In order to prove Theorem $\ref{main}$, we define $H(a,x)$ by
\[
H(a,x):=\frac{e^{(1-a)x}}{e^x-1}-\frac{1}{x}=\frac{xe^{(1-a)x}-e^x+1}{x(e^x-1)},\quad x>0,\quad 0<a\leq1.
\]

\begin{lem}[see {\cite[Lemma 2.1]{T. Nakamura}}]\label{rep}
For $0<a\le 1$ and $0<\sigma<1$, we have 
\[
\Gamma(s)\zeta(s,a)=\int_{0}^{\infty}\left(\frac{e^{(1-a)x}}{e^x-1}-\frac{1}{x}\right)x^{s-1}dx=\int_{0}^{\infty}H(a,x)x^{s-1}dx.
\]
\end{lem}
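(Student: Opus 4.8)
The plan is to start from the classical Gamma-integral representation valid in the half-plane $\sigma>1$ and then push the identity down to the strip $0<\sigma<1$ by analytic continuation, with the subtracted term $1/x$ serving to bookkeep the pole at $s=1$. First I would record that for $\sigma>1$ the elementary identity $\Gamma(s)(n+a)^{-s}=\int_0^\infty e^{-(n+a)x}x^{s-1}\,dx$ holds, sum it over $n\ge0$, and justify interchanging the sum and the integral by absolute convergence (monotone or dominated convergence for the positive integrand). Summing the resulting geometric series gives, for $\sigma>1$,
\[
\Gamma(s)\zeta(s,a)=\int_0^\infty\frac{e^{-ax}}{1-e^{-x}}x^{s-1}\,dx=\int_0^\infty\frac{e^{(1-a)x}}{e^x-1}x^{s-1}\,dx.
\]

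Next I would establish that $F(s):=\int_0^\infty H(a,x)x^{s-1}\,dx$ converges absolutely and defines a holomorphic function on the strip $0<\sigma<1$. Near $x=0$ the expansion $\frac{xe^{(1-a)x}}{e^x-1}=\sum_{n\ge0}B_n(1-a)\frac{x^n}{n!}$ (with $B_0=1$) shows that $H(a,x)=\frac{e^{(1-a)x}}{e^x-1}-\frac1x$ is bounded as $x\to0+$, so $H(a,x)x^{s-1}=O(x^{\sigma-1})$ is integrable for $\sigma>0$. Near $x=\infty$, the first term decays exponentially (this is where $a>0$ enters), so $H(a,x)=-\frac1x+O(e^{-ax})$ and $H(a,x)x^{s-1}=O(x^{\sigma-2})$ is integrable for $\sigma<1$. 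Holomorphy on the strip then follows from the locally uniform absolute convergence.

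The crux is to match the two representations by splitting at $x=1$, where the apparent pole is encoded. For $\sigma>1$, using $\int_0^1 x^{s-2}\,dx=\frac1{s-1}$, I would rewrite the $\sigma>1$ identity as
\[
\Gamma(s)\zeta(s,a)=\int_0^1 H(a,x)x^{s-1}\,dx+\frac1{s-1}+\int_1^\infty\frac{e^{(1-a)x}}{e^x-1}x^{s-1}\,dx=:G(s).
\]
Here the first integral is holomorphic for $\sigma>0$ and the last is entire, so $G$ is holomorphic on $\sigma>0$, $s\ne1$; since $\Gamma(s)\zeta(s,a)$ is as well, the identity $\Gamma(s)\zeta(s,a)=G(s)$ extends by analytic continuation to all of $\sigma>0$, $s\ne1$. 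Finally, for $0<\sigma<1$ I would compute $F(s)$ directly by splitting at $x=1$ and using the companion evaluation $\int_1^\infty x^{s-2}\,dx=-\frac1{s-1}$ (valid precisely for $\sigma<1$) to reintroduce the first term on $(1,\infty)$; this yields exactly $G(s)$, whence $F(s)=G(s)=\Gamma(s)\zeta(s,a)$ on the strip, as claimed.

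The main obstacle is purely bookkeeping: the integral $\int_0^\infty\frac1x\,x^{s-1}\,dx$ never converges, so the term $1/x$ must be apportioned between the two ranges, contributing $+\frac1{s-1}$ from $(0,1)$ when $\sigma>1$ and $-\frac1{s-1}$ from $(1,\infty)$ when $\sigma<1$; verifying that these two contributions conspire to give the same intermediate function $G(s)$ is the only delicate point, the remainder being routine growth estimates and the principle of analytic continuation.
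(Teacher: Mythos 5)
Your argument is correct and complete. The paper itself gives no proof of this lemma, citing it directly from Nakamura's Lemma 2.1; your derivation is the standard one that such a citation points to. The two-step structure is sound: the Bernoulli-polynomial expansion shows $H(a,x)$ is bounded near $x=0$ (so the integral converges at $0$ for $\sigma>0$), the exponential decay of $e^{(1-a)x}/(e^x-1)$ for $a>0$ handles $x\to\infty$ (so the $-1/x$ term forces $\sigma<1$ there), and the delicate bookkeeping point you flag --- that $\int_0^1 x^{s-2}\,dx=\tfrac{1}{s-1}$ for $\sigma>1$ while $\int_1^\infty x^{s-2}\,dx=-\tfrac{1}{s-1}$ for $\sigma<1$, so both regimes reduce to the same function $G(s)$ --- is exactly the mechanism by which the subtracted $1/x$ absorbs the pole at $s=1$ and lets analytic continuation close the gap between the two half-planes. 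No step is missing.
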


\begin{lem}\label{Endo_points}
For any $0<a<\frac{1}{2}$, there exists a positive $x_{0}$ such that
\[
H(a,x_{0})=0,\ \ 
H(a,x)>0\ (\text{for $0< x< x_0$}),\ and \ 
H(a,x)<0\ \ (\text{for $x_0<x$}).
\]
\end{lem}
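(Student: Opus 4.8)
The plan is to track the sign of $H(a,x)$ through an auxiliary multiplicative form. Since $x(e^x-1)>0$ for $x>0$, the sign of $H(a,x)$ agrees with the sign of the numerator $xe^{(1-a)x}-e^x+1$; dividing through by $e^x-1$, I would write $H(a,x)=\frac1x\bigl(F(x)-1\bigr)$ with $F(x):=\dfrac{xe^{(1-a)x}}{e^x-1}=\dfrac{xe^{-ax}}{1-e^{-x}}$, so that the claimed sign pattern of $H$ is equivalent to showing that $F(x)-1$ is positive on some interval $(0,x_0)$, zero at $x_0$, and negative on $(x_0,\infty)$. A short computation at the endpoints gives $F(0+)=1$ and $F(x)\to0$ as $x\to\infty$ (because $e^{-ax}\to0$), which already shows that $F$ descends from the boundary value $1$ down to $0$.

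The key step is to control the monotonicity of $F$ via its logarithmic derivative. I would compute
\[
(\log F)'(x)=\frac1x-\frac{1}{e^x-1}-a=\psi(x)-a,\qquad \psi(x):=\frac1x-\frac1{e^x-1}.
\]
The crux is to prove that $\psi$ is strictly decreasing on $(0,\infty)$ with $\psi(0+)=\tfrac12$ and $\psi(\infty)=0$. The limits follow from the expansion $\frac{x}{e^x-1}=1-\frac x2+\frac{x^2}{12}-\cdots$, and the monotonicity reduces, after clearing denominators in $\psi'(x)<0$, to the elementary inequality $x<2\sinh(x/2)$ for $x>0$ (equivalently $x^2e^x<(e^x-1)^2$), which is immediate from $2\sinh(x/2)-x=\frac{x^3}{24}+\cdots>0$. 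This inequality is the one genuine obstacle, but it is routine.

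Granting this, the argument closes quickly. Because $\psi$ strictly decreases from $\tfrac12$ to $0$ and $0<a<\tfrac12$, there is a unique $x_1$ with $\psi(x_1)=a$; hence $(\log F)'>0$ on $(0,x_1)$ and $(\log F)'<0$ on $(x_1,\infty)$, so $F$ is strictly increasing then strictly decreasing, with a single maximum at $x_1$. Since $F(0+)=1$, the increasing branch keeps $F>1$ on $(0,x_1]$, while on the decreasing branch $F$ falls strictly from $F(x_1)>1$ to $0$, crossing the level $1$ at exactly one point $x_0>x_1$. Thus $F(x)>1$ on $(0,x_0)$, $F(x_0)=1$, and $F(x)<1$ on $(x_0,\infty)$; translating back through $H(a,x)=\frac1x(F(x)-1)$ yields precisely the asserted sign behavior together with the uniqueness of the sign-change point $x_0$.
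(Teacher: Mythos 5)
Your proof is correct, and it takes a genuinely different route from the paper. The paper works additively with the numerator $h(a,x)=xe^{(1-a)x}-e^x+1$ and runs a three-level cascade of sign analyses: it shows $f(a,x):=e^{(a-1)x}h'(a,x)=(1-a)x+1-e^{ax}$ is concave in $x$ with $f(a,0)=0$ and $f'(a,0)=1-2a>0$, then propagates the resulting ``positive then negative'' pattern back up through $h'$ and $h$, using $h(a,0)=0$ and $h(a,x)\to-\infty$. You instead work multiplicatively, writing $H(a,x)=\frac{1}{x}(F(x)-1)$ and controlling $F$ through its logarithmic derivative $(\log F)'(x)=\psi(x)-a$ with $\psi(x)=\frac1x-\frac1{e^x-1}$; the whole lemma then rests on the single $a$-independent fact that $\psi$ decreases strictly from $\frac12$ to $0$, which you correctly reduce to $x<2\sinh(x/2)$. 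Your version makes the role of the hypothesis $0<a<\frac12$ completely transparent --- it is exactly the condition that the level $a$ is attained by $\psi$ --- and it yields extra structural information for free (a unique interior maximum of $F$ at $x_1<x_0$, hence a quantitative handle on where $H$ changes sign); the cost is the one genuinely nonobvious inequality $x^2e^x<(e^x-1)^2$, whereas the paper's cascade needs nothing beyond the concavity of $(1-a)x+1-e^{ax}$. One small point to make explicit when writing this up: the conclusion $F>1$ on $(0,x_1]$ uses that $F$ is \emph{strictly} increasing there together with $\lim_{t\to0+}F(t)=1$, so that $F(x)>F(t)\ge\inf_{(0,x_1)}F=1$ for $0<t<x\le x_1$; as stated this is fine, but it deserves a sentence.
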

\begin{proof}
Since $x(e^x-1)>0$ for all $x>0$, it is enough to consider
\[
h(a,x):=x(e^x-1)H(a,x)=xe^{(1-a)x}-e^x+1.
\]
By differentiating $h(a,x)$ with respect to $x$, we have
\[
h'(a,x)=(1-a)xe^{(1-a)x}+e^{(1-a)x}-e^x.
\]
Put
\[
f(a,x):=e^{(a-1)x}h'(a,x)=(1-a)x+1-e^{ax}.
\]
By differentiating $f(a,x)$ with respect to $x$, we have
\[
f'(a,x)=1-a-ae^{ax},\quad
f''(a,x)=-a^2e^{ax}.
\]
Then we obtain that
\[
h(a,0)=0,\quad \lim_{x \to \infty}h(a,x)=-\infty,
\]
\[
h'(a,0)=0,\quad \lim_{x\to \infty}h'(a,x)=-\infty,
\]
\[
f(a,0)=0,\quad \lim_{x\to \infty}f(a,x)=-\infty,
\]
\[
f'(a,0)=1-2a>0, \quad \lim_{x\to \infty}f'(a,x)=-\infty,
\]
\[
f''(a,x)=-a^2e^{ax}<0.
\]
Hence we have the conclusion.
\end{proof}

\begin{lem}\label{Suzuki}
Let  $0<a<1/2$ and $x_0>0$ as in Lemma $\ref{Endo_points}$. Then the function
\[
x_0^{-\sigma}\Gamma(\sigma)\zeta(\sigma,a)
\]
is strictly decreasing for $0<\sigma<1$.
\end{lem}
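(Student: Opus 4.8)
The plan is to express the normalized function as a single integral whose $\sigma$-derivative has a constant sign. Writing $F(\sigma):=x_0^{-\sigma}\Gamma(\sigma)\zeta(\sigma,a)$ and inserting the formula of Lemma~\ref{rep}, I would substitute $x=x_0u$ to obtain
\[
F(\sigma)=\int_0^\infty H(a,x_0u)\,u^{\sigma-1}\,du,
\]
since the factors $x_0^{-\sigma}$ and $(x_0u)^{\sigma-1}x_0$ combine to leave exactly $u^{\sigma-1}$. The whole point of absorbing the factor $x_0^{-\sigma}$ is that it moves the sign change of the logarithmic weight that appears upon differentiation from $u=1/x_0$ to precisely $u=1$, so that it can be matched against the sign change of $H$ furnished by Lemma~\ref{Endo_points}.

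Next I would differentiate under the integral sign to get
\[
F'(\sigma)=\int_0^\infty H(a,x_0u)\,u^{\sigma-1}\log u\,du.
\]
The decisive step is then purely about signs. By Lemma~\ref{Endo_points} we have $H(a,x_0u)>0$ for $0<u<1$ and $H(a,x_0u)<0$ for $u>1$, while $\log u$ is negative on $(0,1)$ and positive on $(1,\infty)$. Hence the integrand $H(a,x_0u)u^{\sigma-1}\log u$ is strictly negative for every $u\neq1$, and so $F'(\sigma)<0$ for all $0<\sigma<1$, which is exactly the assertion.

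The only genuinely technical point is justifying the differentiation under the integral sign, so I would record the behavior of $H$ at the two endpoints. Near $u=0$ a short expansion shows $H(a,x_0u)\to\tfrac12-a$, so the integrand is of size $u^{\sigma-1}\log u$ and is integrable there for every $\sigma>0$. Near $u=\infty$ one has $H(a,x_0u)=-1/(x_0u)+O(e^{-ax_0u})$, so the integrand is of size $u^{\sigma-2}\log u$, which is integrable precisely because $\sigma<1$. On each compact subinterval $[\sigma_1,\sigma_2]\subset(0,1)$ these two estimates yield an integrable majorant independent of $\sigma$, and dominated convergence then legitimizes the interchange. I expect this convergence bookkeeping, rather than the sign argument, to be the main obstacle, though it is routine once the endpoint asymptotics of $H$ are in hand.
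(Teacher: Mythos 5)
Your proof is correct, and it rests on exactly the same observation as the paper's: normalizing by $x_0^{-\sigma}$ places the sign change of the weight at the same point where $H(a,\cdot)$ changes sign (Lemma~\ref{Endo_points}), so the two signs always oppose each other. The only methodological difference is that you differentiate under the integral sign, whereas the paper avoids differentiation altogether: it splits the integral at $x_0$ and notes that for each fixed $x$ the integrand $H(a,x)(x/x_0)^{\sigma}$ is pointwise decreasing in $\sigma$ --- on $(0,x_0)$ because $H>0$ and $(x/x_0)^{\sigma}$ decreases, on $(x_0,\infty)$ because $H<0$ and $(x/x_0)^{\sigma}$ increases --- so the whole integral is decreasing by direct comparison of two values of $\sigma$. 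That route sidesteps the dominated-convergence bookkeeping you correctly identify as the main technical burden of your version (your endpoint asymptotics $H(a,x)\to\tfrac12-a$ as $x\to0$ and $H(a,x)=-1/x+O(e^{-ax})$ as $x\to\infty$ are right, and they do furnish the required majorant on compact $\sigma$-intervals). What your derivative computation buys in exchange is the strict negativity of $F'(\sigma)$, which the paper needs anyway: it repeats essentially your calculation in the proof of Theorem~\ref{main}~(1) to deduce the simplicity of the zero.
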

\begin{proof}
The function in question can be rewritten as
\[
x_0^{-\sigma}\Gamma(\sigma)\zeta(\sigma,a)
=
\int_{0}^{x_0}H(a,x)\left(\frac{x}{x_0}\right)^{\sigma}\frac{dx}{x}
+
\int_{x_0}^{\infty}H(a,x)\left(\frac{x}{x_0}\right)^{\sigma}\frac{dx}{x}.
\]
As for the first integral, we have
\[
H(a,x)>0,\quad
\text{$\left(\frac{x}{x_0}\right)^{\sigma}$ is decreasing in $\sigma$},
\]
so that the integral
\[
\int_{0}^{x_0}H(a,x)\left(\frac{x}{x_0}\right)^{\sigma}\frac{dx}{x}
\]
is decreasing in $\sigma$.
The same kind of argument can be applied to the second integral, and we find that it is also decreasing for $\sigma$.
Thus we obtain the conclusion.
\end{proof}

\begin{proof}[Proof of {\upshape(1)} of Theorem \ref{main}]
Let $0<a<1/2$. Then we have
\[
\zeta(0,a)=\frac{1}{2}-a>0,\quad \lim_{\sigma\to 1-}\zeta(\sigma,a)=-\infty
\]
(see~\cite{Apostol,T. Nakamura}) which gives 
\[
\lim_{\sigma\to 0+}x_0^{-\sigma}\Gamma(\sigma)\zeta(\sigma,a)=\infty,\quad 
\lim_{\sigma\to 1-}x_0^{-\sigma}\Gamma(\sigma)\zeta(\sigma,a)=-\infty.
\]
By Lemma $\ref{Suzuki}$,
we have the uniqueness of zero since $x_0^{-\sigma}\Gamma(\sigma)$ has no zero and no pole in $(0,1)$.
Put
\[
F(\sigma,a)=x_0^{-\sigma}\Gamma(\sigma)\zeta(\sigma,a).
\]
To prove the simplicity of $\beta(a)$, we calculate $\frac{\partial}{\partial\sigma}F(\sigma,a)$.
For any $0<\sigma<1$, we have
\[
\frac{\partial}{\partial\sigma}F(\sigma,a)
=\left(\int_{0}^{x_0}+\int_{x_0}^{\infty}\right)H(a,x)\left(\frac{x}{x_0}\right)^\sigma\log\left(\frac{x}{x_0}\right)\frac{dx}{x}<0
\]
since, for any $x\neq x_0$, Lemma \ref{Endo_points} implies
\[
H(a,x)\left(\frac{x}{x_0}\right)^\sigma\log\left(\frac{x}{x_0}\right)<0.
\]
\if0
On the other hand, we have 
\begin{align*}
&0>\frac{\partial}{\partial\sigma}F(\sigma,a)\\
=&-x_0^{-\sigma}\log( x_0)\Gamma(\sigma)\zeta(\sigma,a)+x_0^{-\sigma}\frac{\partial}{\partial\sigma}\Gamma(\sigma)\zeta(\sigma,a)+x_0^{-\sigma}\Gamma(\sigma)\frac{\partial}{\partial\sigma}\zeta(\sigma,a).
\end{align*}
Substituting $\sigma=\beta(a)$, we obtain
\[
0>\frac{\partial}{\partial\sigma}\zeta\left(\beta(a),a\right)
\]
since $x_0^{-\beta(a)}\Gamma\left(\beta(a)\right)>0$.
\fi
Since $x_0^{-\sigma}\Gamma(\sigma)$ has no zero and no pole in $(0,1)$, the simplicity of real zeros follows.
\end{proof}
Next we will prove (2) of Theorem \ref{main}.
\begin{lem}\label{conti}
Fix $0<\sigma<1$. Then the function in the variable $a$ defined by
\[
\int_{0}^{\infty}H(a,x)x^{\sigma-1}dx
\]
is continuous for $0<a<1$. Moreover, we have 
\[
\lim_{a\to0+}\int_{0}^{\infty}H(a,x)x^{\sigma-1}dx=\infty.
\]
\end{lem}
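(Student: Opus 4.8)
The plan is to treat the two assertions separately: continuity by dominated convergence, and the divergence by an explicit lower bound that isolates the large-$x$ tail.

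For continuity at a fixed $a_0\in(0,1)$, I would fix a compact neighborhood $[a_1,a_2]\subset(0,1)$ with $a_0\in(a_1,a_2)$ and produce an $a$-independent, integrable majorant for $|H(a,x)|x^{\sigma-1}$. Near $x=0$ I would Taylor-expand to get $H(a,x)=(\frac12-a)+O(x)$ with the implied constant uniform for $a\in[a_1,a_2]$, so that $H$ is bounded on $(0,1]$ and $|H(a,x)|x^{\sigma-1}\le Cx^{\sigma-1}$ there, which is integrable since $\sigma>0$. For large $x$ I would write $\frac{e^{(1-a)x}}{e^x-1}=\frac{e^{-ax}}{1-e^{-x}}\le\frac{e^{-a_1 x}}{1-e^{-1}}$ for $x\ge1$, so that $|H(a,x)|x^{\sigma-1}\le Ce^{-a_1 x}x^{\sigma-1}+x^{\sigma-2}$, both terms being integrable on $[1,\infty)$. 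Since $H(a,x)$ is continuous in $a$ for each fixed $x$, dominated convergence then yields continuity of the integral at $a_0$.

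For the divergence, the key observation is that the blow-up comes not from $x=0$ (where $H$ stays bounded) but from the tail, where the factor $e^{-ax}$ loses its decay as $a\to0+$. I would split the integral at $x=1$. On $(0,1]$ the uniform bound $|H(a,x)|\le C$ for $a\in(0,1/2]$ gives $\bigl|\int_0^1 H(a,x)x^{\sigma-1}dx\bigr|\le C/\sigma$, so this part stays bounded. On $[1,\infty)$ I would use the pointwise inequality $\frac{1}{1-e^{-x}}>1$, giving $H(a,x)\ge e^{-ax}-\frac1x$, hence
\[
\int_1^\infty H(a,x)x^{\sigma-1}dx\ge\int_1^\infty e^{-ax}x^{\sigma-1}dx-\int_1^\infty x^{\sigma-2}dx.
\]
The subtracted integral equals $(1-\sigma)^{-1}$, a constant independent of $a$; the first integral, after the substitution $u=ax$, equals $a^{-\sigma}\int_a^\infty e^{-u}u^{\sigma-1}du$, which tends to $+\infty$ as $a\to0+$ because $a^{-\sigma}\to\infty$ while $\int_a^\infty e^{-u}u^{\sigma-1}du\to\Gamma(\sigma)>0$. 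Combining the bounded piece on $(0,1]$ with this divergent piece gives the claimed limit.

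I expect the main obstacle to be the behavior near $x=0$: both summands of $H$ individually diverge like $1/x$, so the uniform boundedness on $(0,1]$ hinges on a Taylor expansion showing that this cancellation is uniform in $a$ across the relevant range. Once that uniform control is secured, the majorant needed for dominated convergence and the lower bound needed for the divergence both follow cleanly, and the remaining estimates are routine.
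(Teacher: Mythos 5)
Your proof is correct, and it is considerably more explicit than what the paper records: the published proof of this lemma is a one-line appeal to ``Lebesgue's convergence theorem,'' and the authors' suppressed sketch runs along slightly different lines. For continuity, they exploit the fact that $H(a,x)$ is strictly decreasing in $a$ (since $\frac{\partial}{\partial a}H(a,x)=-\frac{xe^{(1-a)x}}{e^x-1}<0$), so that for $a\in(a_1,a_2)$ one may take $M(x)=|H(a_1,x)x^{\sigma-1}|+|H(a_2,x)x^{\sigma-1}|$ as the dominating function without any splitting of the domain; you instead build an explicit majorant by separating $(0,1]$ (where the uniform Taylor expansion $H(a,x)=(\tfrac12-a)+O(x)$ controls the $1/x$ cancellation) from $[1,\infty)$ (where $e^{-a_1x}$ decay takes over). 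Both are valid; theirs is shorter but leans on the monotonicity in $a$ and on knowing a priori that the endpoint integrals converge, while yours is self-contained and makes the source of integrability visible. For the divergence, the authors invoke the monotone convergence theorem --- $H(a,x)$ increases to $\frac{e^x}{e^x-1}-\frac1x$ as $a\to0+$ and the limiting integrand has divergent integral at infinity --- whereas you give a quantitative lower bound $\int_1^\infty e^{-ax}x^{\sigma-1}\,dx\ge a^{-\sigma}\int_a^\infty e^{-u}u^{\sigma-1}\,du-\frac{1}{1-\sigma}\to\infty$, which actually yields the rate $\gg a^{-\sigma}$ rather than mere divergence; this extra information is not needed for the lemma but is consistent with the $a^{-\sigma}$ term that drives the later asymptotic analysis of $\beta(a)$. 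Your identification of the $x\to0$ cancellation as the one point requiring uniform care is exactly right.
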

\begin{proof}
This result immediately follows from Lebesgue's convergence theorem.
\if0
For any $0<a_0<1$, choose $a_1,a_2\in(0,1)$ such that $a_1<a_0<a_2$. For $a_1<a<a_2$, we have
\[
|H(a,x)x^{\sigma-1}|\leq|H(a_1,x)x^{\sigma-1}|+|H(a_2,x)x^{\sigma-1}|=:M(x),
\]
since $H(a,x)$ is strictly decreasing with respect to $a$. Since $\int_{0}^{\infty}M(x)dx<\infty$, we can apply Lebesgue's dominated convergence theorem. So we have
\[
\lim_{a\to a_0}\int_{0}^{\infty}H(a,x)x^{\sigma-1}dx
=\int_{0}^{\infty}\lim_{a\to a_0}H(a,x)x^{\sigma-1}dx
=\int_{0}^{\infty}H(a_0,x)x^{\sigma-1}dx
\]
and so we obtain $\int_{0}^{\infty}H(a,x)x^{\sigma-1}dx$ is continuous. We have
\begin{align*}
\lim_{a\to0+}\int_{0}^{\infty}H(a,x)x^{\sigma-1}dx& = \int_{0}^{\infty}\lim_{a\to +0}H(a,x)x^{\sigma-1}dx\\
&= \int_{0}^{\infty}\left(\frac{e^x}{e^x-1}-\frac{1}{x}\right)x^{\sigma-1}dx=\infty
\end{align*}
by Lebesgue's monotone convergence theorem.
\fi
\end{proof}

\begin{proof}[Proof of {\upshape(2)} of Theorem \ref{main}]
Let $0<a<1/2$ and $0<\sigma<1$. First, we show that $\beta$ is strictly decreasing. Suppose that $0<a_1<a_2<1/2$. By the integral representation of $\zeta(\sigma,a)$ in Lemma $\ref{rep}$, $\zeta(\sigma,a)$ is strictly decreasing with respect to $a$. Then it holds that
\[
0=\zeta\left(\beta(a_1),a_1\right)>\zeta\left(\beta(a_1),a_2\right).
\]
On the other hand, by the uniqueness of the zero of $\zeta(\sigma,a)$, it holds that
\[
0>\zeta(\sigma,a_2)\quad \text{if and only if}\quad \beta(a_2)<\sigma.
\]
Hence we have $\beta(a_1)>\beta(a_2)$. Therefore we obtain the monotonicity of $\beta$ and this implies that $\beta$ is injective.\par

Secondly, we show that $\beta$ is surjective. Take $\sigma\in(0,1)$. By Lemma \ref{conti}, we have
\[
\lim_{a\to 0+}\int_{0}^{\infty}H(a,x)x^{\sigma-1}dx=\infty
\]
and 
\[
\lim_{a\to 1/2}\int_{0}^{\infty}H(a,x)x^{\sigma-1}dx=\int_{0}^{\infty}H(1/2,x)x^{\sigma-1}dx<0
\]
since $H(1/2,x)<0$ 
for $x>0$ (see \cite[Lemma 2.2]{T. Nakamura}). Hence, there exists $a\in(0,1/2)$ such that $\int_{0}^{\infty}H(a,x)x^{\sigma-1}dx=0$. Therefore, $\beta$ is surjective.\par

Thirdly, in order to prove that $\beta$ is a $C^\infty$-diffeomorphism,
we regard $\zeta(\sigma,a)$ as a function of two variables $\zeta(\cdot,\cdot):(0,1)\times(0,1/2)\rightarrow\mathbb{R}$. 
Then, $\zeta(\cdot,\cdot)$ is a $C^\infty$-function and, by Theorem \ref{main} (1),
we have $\frac{\partial}{\partial \sigma}\zeta\left(\beta(a),a\right)\neq0$,
Hence by the implicit function theorem and the uniqueness of zeros,
we have that $\beta:(0,1/2)\rightarrow(0,1)$ is $C^\infty$.
By Lemma 2.1, we find that
\[
\frac{\partial}{\partial a}\zeta(\sigma,a)=-\frac{1}{\Gamma(s)}\int_0^\infty\frac{e^{(1-a)x}}{e^x-1}x^{\sigma}dx<0.
\]
Thus by the inverse function theorem, we find that the inverse of $\beta$ is also $C^\infty$.

Finally, we shall show the asymptotic formula for $\beta(a)$ as $a\rightarrow 0+$. Note that it follows that $\beta(a)\rightarrow1-$ as $a\rightarrow0+$ from the surjectivity and the monotonicity of $\beta(a)$. Thus we can assume, without loss of generality, that $\beta(a)\ge1/2$.
We start with the Euler-Maclaurin summation formula
\begin{align*}
&\zeta(s,a)=a^{-s}+\sum_{n=0}^{\infty}\frac{1}{(n+a+1)^s}\\
=\,&a^{-s}+\frac{(a+1)^{1-s}}{s-1}+\frac{(a+1)^{-s}}{2}
-s\int_{0}^{\infty}\left(x-[x]-\frac{1}{2}\right)(x+a+1)^{-s-1}dx
\end{align*}
which holds for $0<\sigma<1$. When $s=\sigma$, it holds that
\[
\left|\sigma\int_{0}^{\infty}(x-[x]-\frac{1}{2})(x+a+1)^{-\sigma-1}dx\right|
\ll\sigma\int_0^{\infty}(x+a+1)^{-\sigma-1}dx\ll1
\]
uniformly for $0<\sigma<1$. Thus we obtain
\[
\zeta(\sigma,a)=a^{-\sigma}+\frac{(a+1)^{1-\sigma}}{\sigma-1}+O(1).
\]
Let us use an abbreviation $\beta=\beta(a)$ and substitute $\sigma=\beta$. Then we have
\begin{equation}
\label{EQ:basic_approx}
\beta-1=-a^{\beta}(a+1)^{1-\beta}+O((1-\beta)a^{\beta}).
\end{equation}
Note that $\beta-1\ll a^\beta\ll a^{1/2}$ by the assumption $\beta\ge1/2$.
Hence it holds that
\begin{align}
a^\beta
&=a\exp((\beta-1)\log a)
=a+(\beta-1)a\log a +O\left((\beta-1)^2a|\log a|^2\right).\label{EQ:approx1} 
\end{align}
In particular we have 
\begin{equation}\label{EQ:approx2}
a^\beta\ll a.
\end{equation}
It also holds that
\begin{align}
(a+1)^{1-\beta}&=\exp\left((1-\beta)\log(1+a)\right)
=1+O\left((1-\beta)a\right).\label{EQ:approx3}
\end{align}
By substituting the estimates \eqref{EQ:approx1}, \eqref{EQ:approx2} and \eqref{EQ:approx3} into \eqref{EQ:basic_approx}, we have
\begin{align}
\beta-1
&=-a+(1-\beta)a\log a+O\left((1-\beta)a\right).
\label{EQ:second_approx}
\end{align}
In particular, we have
\[
1-\beta\ll a\quad \text{and} \quad \beta-1=-a+O\left(a^2|\log a|\right).
\]
Substituting this estimate into \eqref{EQ:second_approx}, we have
\begin{align*}
\beta-1&=-a+\left(a+O(a^2|\log a|)\right)a\log a+O(a^2)\\
&=-a+a^2\log a+O(a^2).
\end{align*}
This completes the proof.
\end{proof}

\begin{rem}
We can also obtain the asymptotic formula for $\beta(a)$ in Theorem $\ref{main}$ by using generalized Stieltjes constants (see e.g.~\cite{Berndt}). Let
\[
\zeta(s,a)=\frac{1}{s-1}+\sum_{n=0}^{\infty}\gamma_n(a)(s-1)^n
\]
be the Laurent series of $\zeta(s,a)$ at $s=1$.
Then $\beta$ can be written by
\begin{equation}
\label{EQ:Berndt_expansion}
\beta(a)=1-\frac{1}{\gamma_0(a)}\left(1+\sum_{n=1}^{\infty}\gamma_n(a)(\beta-1)^{n+1}\right).
\end{equation}
Hence, in principle, we can obtain further asymptotic expansions of $\beta(a)$
from asymptotic expansions of Stieltjes constants with respect to the variable $a$
by using the formula \eqref{EQ:Berndt_expansion} recursively.
\end{rem}

\begin{ackname}
The authors are deeply grateful to Prof.~Kohji Matsumoto, Prof.~Takashi Nakamura, Mr.~Tomohiro Ikkai, Mr.~Sh\={o}ta Inoue, Mr.~Masahiro Mine for their helpful comments.
\end{ackname}

\vspace{1mm}
\begin{flushleft}
{\footnotesize
{\sc
Graduate School of Mathematics, Nagoya University,\\
Chikusa-ku, Nagoya 464-8602, Japan.
}\\
{\it E-mail address}, K. Endo: {\tt m16010c@math.nagoya-u.ac.jp}\\
{\it E-mail address}, Y. Suzuki: {\tt m14021y@math.nagoya-u.ac.jp}\\
}
\end{flushleft}

\begin{thebibliography}{9}
\bibitem{Apostol}
T. M. Apostol, \emph{Introduction to Analytic Number Theory}, Undergraduate Texts in Mathematics, Springer, New York, 1976.
\bibitem{Berndt}B. C. Berndt, \emph{On the Hurwitz zeta-function}, Rocky Mountain J. Math. \textbf{2} (1972), no. 1, 151-157.
\bibitem{Matsusaka}
T. Matsusaka, \emph{Real zeros of the Hurwitz zeta function}, \texttt{arXiv/1610.07945}, preprint (2016).
\bibitem{T. Nakamura}
T. Nakamura,
\emph{Real zeros of Hurwitz-Lerch zeta and Hurwitz-Lerch type of Euler-Zagier double zeta functions},
Math. Proc. Cambridge Philos. Soc. \textbf{160} (1) (2016), 39--50.
\bibitem{Nakamura2}
T. Nakamura,
\emph{Real zeros of Hurwitz-Lerch zeta functions in the interval $(-1,0)$},
J. Math. Anal. Appl. \textbf{438} (1) (2016), 42--52.
\bibitem{Spira}
R. Spira, \emph{Zeros of Hurwitz zeta functions}, Math. Comp. \textbf{30} (136) (1976), 863-866.
\end{thebibliography}
\end{document}